\newcommand*{\arXiv}[2]{#1} 
\newcommand*{\z}[1]{#1\nobreak\discretionary{}%
            {\hbox{$\mathsurround=0pt #1$}}{}}
\theoremstyle{theorem}
\newtheorem{theorem}{Theorem}
\newtheorem*{maintheorem}{\arXiv{Main theorem}{Main Theorem}}
\newtheorem{Proposition}[theorem]{Proposition}
\newtheorem{Crofton-type formula}[theorem]{Crofton-type formula}
\newtheorem{Lemma}[theorem]{Lemma}
\newtheorem{Douglas--Rado theorem}[theorem]{\arXiv{Douglas--Rado theorem}{Theorem}}
\newtheorem{Extended monotonicity theorem}[theorem]{\arXiv{Extended monotonicity theorem}{Theorem}}
\newtheorem{Theorem}[theorem]{Theorem}
\newtheorem{Claim}[theorem]{Claim}
\newtheorem{Observation}[theorem]{Observation}
\theoremstyle{definition}
\def\area{\mathop{\rm area}\nolimits}
\def\length{\mathop{\rm length}\nolimits}
\def\cross{\mathop{\rm cross}\nolimits}
\def\DD{\mathbb{D}}
\def\RR{\mathbb{R}}
\def\ZZ{\mathbb{Z}}
\def\solidtriangle{\text{\ding{115}}}
\newcommand*{\tc}[1]{\ifthenelse{\equal{#1}{}}{\Phi}{\Phi({#1})}}
\def\epsilon{\varepsilon}
\def\eps{\varepsilon}
\def\phi{\varphi}
\def\emptyset{\varnothing}
\def\ge{\geqslant}
\def\le{\leqslant}
\newcommand{\threestars}{\smash{%
\raisebox{-.6ex}{%
\setlength{\tabcolsep}{.5pt}%
\begin{tabular}{@{}cc@{}}%
\multicolumn2c*\\[-1.7ex]*&*%
\end{tabular}}}}
\newcommand{\fourstars}{\smash{%
\raisebox{-.6ex}{%
\setlength{\tabcolsep}{.5pt}%
\begin{tabular}{@{}cc@{}}%
*&*\\[-1.7ex]*&*%
\end{tabular}}}}
\begin{document}

\title{
\arXiv{Six proofs of the Fáry--Milnor theorem}{Six Proofs of the Fáry--Milnor Theorem}
}
\author{Anton Petrunin and Stephan Stadler}
\date{}
\maketitle

\begin{abstract}
We survey known proofs of the Fáry--Milnor theorem;
it states that any nontrivial knot makes at least two full turns.
\end{abstract}

\section*{INTRODUCTION.}

The following problem was posed by Karol Borsuk \cite{borsuk}.

\smallskip

\textit{Show that the total curvature of any nontrivial knot is at least $4\arXiv{{\cdot}}{}\pi$.}

\smallskip

It is known by many proofs based on different ideas.
We sketch several solutions, one solution per section;
each can be read independently.

This problem also has a number of refinements and generalizations;
in particular, a strict inequality holds --- this is the famous \emph{Fáry--Milnor theorem}.
However, for the sake of simplicity, we stick to the original formulation.

In order to continue, we need to agree on a definition of knot and explain what a \emph{nontrivial knot} is.
Despite the intuitive idea of a knot as a \emph{cyclic rope},
the simplest formal definition uses polygonal curves.
It turns out that this definition is also best suited for our purposes.

A \emph{knot} (more precisely, a \emph{tame knot}) is a simple closed polygonal curve in the Euclidean space~$\mathbb{R}^3$ (\emph{simple} means \emph{no self-intersections}).

The solid triangle with vertices $a$, $b$, and $c$ will be denoted by $\solidtriangle abc$.
It is defined as the convex hull of the points $a$, $b$, and $c$;
the points $a$, $b$, and $c$ are assumed to be distinct, but they might lie on one line.

We define a \emph{triangular isotopy} of a knot to be the generation of a new knot from the original one by means of the
following two operations:

\begin{itemize}
\item Assume $[p,q]$ is an edge of the knot and $x$
is a point such that the solid triangle $\solidtriangle pqx$  has no common points with the knot except for the edge $[p,q]$.
Then we can replace the edge $[p,q]$ with the two adjacent edges $[p,x]$ and $[x,q]$.
\item We can also perform the inverse operation.
That is, if for two adjacent edges $[p,x]$ and $[x,q]$ of a knot the triangle
$\solidtriangle pqx$ has no common points with the knot except for the points on the edges $[p,x]$ and $[x,q]$,
then we can replace $[p,x]$ and $[x,q]$ by one edge $[p,q]$.
\end{itemize}

Polygons that arise from one another by a finite sequence of
triangular isotopies are called \emph{isotopic}.
A knot that is not isotopic to a triangle (that is, a simple polygonal curve with three vertices) is called \emph{nontrivial}.

\begin{figure}[!ht]
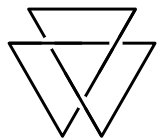

\vskip-0mm
\centering
\arXiv{\includegraphics{mppics/pic-18}}{\includegraphics{pic-18}}
\vskip0mm
\caption{Trefoil knot.}
\label{fig1}
\end{figure}

The trefoil knot shown on Figure~\ref{fig1} gives a simple example of a nontrivial knot.
A proof that it is nontrivial can be found in any textbook on knot theory.
The most elementary and visual proof is based on the so-called \emph{tricolorability} of knot diagrams \cite[Section 1.5]{adams}.

The total curvature of a smooth curve is usually defined as the integral of its curvature.
For polygons, it is defined as the sum of its external angles.
It is well known that the total curvature of a curve cannot be smaller than the total curvature of an inscribed polygonal curve (see, for example, \cite{petrunin-zamora}).
In fact, the total curvature of a curve can be defined as the least upper bound on the total curvature of inscribed polygonal curves \cite{aleksandrov-reshetnyak, sullivan-curves}.
This definition agrees with the definition given for smooth curves, and it makes sense for any simple curve.
The total curvature of a curve $\alpha$ will be denoted by $\Phi(\alpha)$.

All this leads to the following reformulation which we are going to prove.

\begin{maintheorem}
\textit{$\Phi(\alpha)\ge 4\arXiv{{\cdot}}{}\pi$ for any nontrivial knot $\alpha$.}
\end{maintheorem}

\section{Milnor--Fenner.}

One of the first solutions to the problem was found by John Milnor \cite{milnor}.
In this section, we present an amusing interpretation of his proof found by Stephen Fenner \cite{ferner}.
Just like the original version, it is based on the following sufficient condition for the triviality of a knot.

\begin{Proposition}\label{prop:one-max-one-min}
Assume that a height function $(x,y,z)\to z$ 
has only one local maximum on a simple closed polygonal curve $\alpha$ and all the vertices of the polygonal curve are at different heights.
Then $\alpha$ is a trivial knot.
\end{Proposition}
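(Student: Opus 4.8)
\medskip

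The plan is to show that $\alpha$ bounds an embedded disk and then collapse that disk to a single triangle by triangular isotopies. First I would record a purely combinatorial fact. Since the vertices of $\alpha$ lie at distinct heights, each vertex is a strict local maximum, a strict local minimum, or a pass-through point of $z$; traversing $\alpha$ once, local maxima and local minima of $z$ alternate, so there are equally many of each, and since there is at least one of each the hypothesis yields a \emph{unique} local minimum $p_{\min}$ alongside the unique local maximum $p_{\max}$. Deleting these two points cuts $\alpha$ into two arcs $\alpha_\uparrow$ and $\alpha_\downarrow$, each with no interior local extremum of $z$, hence each with $z$ strictly monotone along it, both running from the height of $p_{\min}$ up to the height of $p_{\max}$.

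Next I would build the disk. For each height $c$ strictly between $z(p_{\min})$ and $z(p_{\max})$ there is exactly one point $a(c)\in\alpha_\uparrow$ and one point $b(c)\in\alpha_\downarrow$ at height $c$, with $a(c)\neq b(c)$ because $\alpha$ is simple; set $a=b=p_{\min}$, resp.\ $p_{\max}$, at the extreme heights. Each chord $[a(c),b(c)]$ is horizontal, so chords at distinct heights lie in distinct horizontal planes and are disjoint, whence $D=\bigcup_c[a(c),b(c)]$ is a topologically embedded disk with $\partial D=\alpha_\uparrow\cup\alpha_\downarrow=\alpha$. Between two consecutive values among the finitely many vertex heights, $a(c)$ stays on a single edge of $\alpha_\uparrow$ and $b(c)$ on a single edge of $\alpha_\downarrow$, so the corresponding piece of $D$ has four corners and can be split into two triangles by a diagonal; after a sufficiently small generic perturbation of the vertices of $\alpha$ (realizable by triangular isotopies, and harmless to simplicity, to the distinctness of heights, and to the ``one maximum'' property) these triangulated pieces fit together into an \emph{embedded polygonal} disk $D$, still with $\partial D=\alpha$ and with $D\cap\alpha=\partial D$.

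Finally I would strip $D$ down. After subdividing the edges of $\alpha$ by degenerate triangular isotopies so that the boundary vertices of a triangulation of $D$ are exactly the vertices of $\alpha$, I repeatedly pick, as long as $D$ has more than one triangle, a triangle $\solidtriangle pqr$ of $D$ with one or two edges on $\partial D$ (one exists, since every boundary edge lies in exactly one triangle). If only $[p,q]$ lies on $\partial D$, then $r$ is interior to $D$, so $\solidtriangle pqr\subseteq D$ meets $\alpha$ only along $[p,q]$, and replacing the edge $[p,q]$ by $[p,x]$ and $[x,q]$ with $x=r$ is a triangular isotopy; if both $[p,q]$ and $[q,r]$ lie on $\partial D$, then $\solidtriangle pqr$ meets $\alpha$ only along $[p,q]\cup[q,r]$, and we replace those two edges by $[p,r]$. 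Each move deletes one triangle from $D$ and leaves a polygonal disk bounded by the new knot, so after finitely many moves $D$ is a single triangle, $\alpha$ has become a triangle, and hence the original $\alpha$ is trivial.

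The step I expect to be the main obstacle is the middle one: upgrading the horizontally ruled disk to an honest \emph{embedded} polygonal disk that still meets $\alpha$ only in its boundary. The underlying topological disk is visibly embedded, but a careless choice of diagonal inside a slab can introduce a self-intersection of $D$ or a spurious intersection of $D$ with $\alpha$, so one must check that a generic small perturbation of the vertices kills the finitely many coplanarity and crossing coincidences responsible for this. Alternatively, one can bypass the whole issue by invoking the standard fact that a tame knot bounding a tame embedded disk in $\RR^3$ is unknotted.
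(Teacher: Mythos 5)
Your strategy---build a spanning disk ruled by horizontal chords, promote it to an embedded polygonal disk, then collapse it triangle by triangle---is genuinely different from the paper's. The paper's argument is entirely local: since there is one local maximum (and hence one local minimum, and two monotone arcs), the three highest vertices of $\alpha$ are consecutive, say $p_{n-1},p_n,p_1$; the solid triangle $\solidtriangle p_{n-1}p_np_1$ meets $\alpha$ only in its two adjacent edges because every other edge lies strictly below it; so a single triangular isotopy deletes the top vertex while preserving the hypotheses, and $n-3$ repetitions give a triangle. No spanning disk is ever constructed. Your route is more global and would in fact produce a spanning disk explicitly, but at the cost of substantially more machinery.

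As written, though, the proposal has a concrete error in the collapsing step, independent of the embedding issue you flag yourself. You assert: ``If only $[p,q]$ lies on $\partial D$, then $r$ is interior to $D$.'' This is false for a general triangulated disk: a triangle can have exactly one boundary edge while its opposite vertex $r$ is also a boundary vertex, with $[p,r]$ and $[q,r]$ interior edges. In that case replacing $[p,q]$ by $[p,r],[r,q]$ makes the curve pass through $r$ twice, so it is no longer simple and the move is not a triangular isotopy; moreover deleting that triangle pinches $D$ at $r$ rather than leaving a disk. To fix this you cannot pick an arbitrary triangle with one or two boundary edges; you need a removal order in which each deleted triangle either is an ear (two boundary edges) or has one boundary edge with genuinely interior apex. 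That such an order exists (shellability of simplicial $2$-disks) is classical but not free, and must be argued or cited. The earlier gap---upgrading the ruled disk to an embedded polygonal disk by a ``generic small perturbation''---also needs real justification, since a coarse diagonal triangulation of a skew ruled quadrilateral is not uniformly close to the ruled surface; and the alternative of invoking ``a tame knot bounding a tame embedded disk is unknotted'' does not sidestep the shelling issue so much as relabel it, since that standard fact is proved by exactly the kind of collapse you are attempting. The approach is salvageable, but it requires noticeably more bookkeeping than the paper's direct top-vertex peeling.
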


The proof is a straightforward construction of a triangular isotopy. 

\begin{proof}
Let $\alpha=p_1\dots p_n$.
We can assume that $n\ge 4$; otherwise the statement is trivial.
Note that the height function also has a unique local minimum, and $\alpha$ can be divided into two arcs from the min-vertex to the max-vertex with a monotonic height function.

Consider the three vertices with the largest height;
they have to include the max-vertex and two more.
Note that these three vertices are consecutive in the polygonal curve; 
without loss of generality, we can assume that they are $p_{n-1}$, $p_n$, and $p_1$.

Note that the solid triangle $\solidtriangle p_{n-1}p_np_1$ does not intersect any edge of $\alpha$ except the two adjacent edges $[p_{n-1},p_n]$ and $[p_n,p_1]$; see Figure~\ref{fig2}.
Indeed, if $\solidtriangle p_{n-1}p_np_1$ intersects $[p_1,p_2]$,
then, 
since $p_2$ lies below $\solidtriangle p_{n-1}p_np_1$,
the edge $[p_1,p_2]$ must intersect $[p_{n-1},p_n]$;
the latter is impossible since $\alpha$ is simple.

\arXiv{The}{In the} same way, one can show that $\solidtriangle p_{n-1}p_np_1$ cannot intersect $[p_{n-2},p_{n-1}]$.
The remaining edges lie below $\solidtriangle p_{n-1}p_np_1$, hence they cannot intersect this triangle.

Applying a triangular isotopy to $\solidtriangle p_{n-1}p_np_1$ we get a simple closed polygonal curve $\alpha'\z=p_1\dots p_{n-1}$ which is isotopic to~$\alpha$.

Since all the vertices $p_i$ have different heights,
the assumption of the proposition holds for $\alpha'$.

Repeating this procedure $n-3$ times we get a triangle.
Hence $\alpha$ is a trivial knot.
\end{proof}

\begin{figure}[!ht]
\begin{minipage}{.48\textwidth}
\centering
\arXiv{\includegraphics{mppics/pic-19}}{\includegraphics{pic-19}}
\end{minipage}\hfill
\begin{minipage}{.48\textwidth}
\centering
\arXiv{\begin{lpic}[t(-0mm),b(0mm),r(4mm),l(0mm)]{asy/Ui(1)}}{\begin{lpic}[t(-0mm),b(0mm),r(4mm),l(0mm)]{Ui(1)}}
\lbl[l]{24.5,10;$v_i$}
\lbl[lt]{22,5;$v_{i-1}$}
\lbl[tl]{18,10.5;$\phi_i$}
\lbl[br]{5,21;$U_i$}
\end{lpic}
\end{minipage}

\medskip

\begin{minipage}{.48\textwidth}
\centering
\caption{Triangular isotopy.}
\label{fig2}
\end{minipage}\hfill
\begin{minipage}{.48\textwidth}
\centering
\caption{Spherical slice.}
\label{fig3}
\end{minipage}
\vskip-0mm
\end{figure}

\begin{proof}[Proof of \arXiv{the main theorem}{Main Theorem}]
Let $\alpha=p_1\dots p_n$ be a nontrivial polygonal knot.
Denote by $v_i$ the unit vector in the direction of $p_{i+1}-p_i$;
we assume that $p_n=p_0$.

Consider the set $U_i$ formed by all unit vectors $u$ such that $\measuredangle(u,v_i)\ge \tfrac \pi 2$ and $\measuredangle(u,v_{i-1})\le \tfrac \pi 2$;
see Figure~\ref{fig3}.
Note that $u\z\in U_i$ if and only if the function $x\mapsto \langle u,x\rangle$ has a local maximum at $p_i$ on~$\alpha$; here $\langle\ ,\ \rangle$ denotes the scalar product.

Let us choose $(x,y,z)$-coordinates in the space so that the $z$-axis points in the direction of $u$.
Then according to Proposition~\ref{prop:one-max-one-min}, the function $p\mapsto \langle u,p\rangle$ has at least two local maxima on $\alpha$.
It follows that the sets $U_1,\dots,U_n$ cover each point on the unit sphere $\mathbb{S}^2$ twice.

Recall that $\area \mathbb{S}^2=4\arXiv{{\cdot}}{}\pi$.
Observe that $\phi_i=\measuredangle(v_{i-1},v_i)$ is the external angle of $\alpha$ at $p_i$.
Note that $U_i$ is a slice of the sphere between two meridians meeting at angle $\phi_i$, therefore $U_i$ occupies a $\tfrac{\phi_i}{2\arXiv{{\cdot}}{}\pi}$ portion of the whole sphere; so, $2\arXiv{{\cdot}}{} \phi_i=\area U_i$.
Since the sets $U_1, \dots, U_n$ cover $\mathbb{S}^2$ twice, we get
\[\tc\alpha=\phi_1+\dots+\phi_n=\tfrac12\arXiv{\cdot}{} (\area U_1+\dots+\area U_n)\ge \tfrac22\arXiv{\cdot}{} \area\mathbb{S}^2=4\arXiv{{\cdot}}{}\pi.\]
\end{proof}

\section{Fáry.}\label{sec:fary}

In this section, we sketch the solution of István Fáry \cite{fary} which was published before Milnor's proof.

We start with Crofton-type formulas for total curvature \cite[Proposition 4.1]{sullivan-curves}.
Given a curve $\alpha$ in $\mathbb{R}^3$ and a unit vector $u$, denote by $\alpha_{u^\perp}$ 
and $\alpha_u$ the projections of $\alpha$ to the plane perpendicular to $u$ and the line parallel to $u$, respectively.
Let us denote by $\overline{f(u)}$ the average value of $f(u)$ for a function $f\colon\mathbb{S}^2\to\mathbb{R}$.

\begin{Crofton-type formula}\label{prop:tc-crofton}
Let $\alpha$ be a polygonal curve in $\mathbb{R}^3$.
Then
\begin{align*}
\tc\alpha
&=\overline{\tc{\alpha_{u^\perp}}}
=\overline{\tc{\alpha_u}}.
\end{align*}
\end{Crofton-type formula}

\begin{proof}
Observe that it is sufficient to check the identities for $\alpha$ made of two edges.
Denote its external angle by $\phi$, so $\tc\alpha=\phi$.
Observe that each term in the formula is proportional to $\phi$.
Therefore, it is sufficient to consider the case $\phi=\pi$.

In other words, we can assume that $\alpha$ is made of two edges that turn in the opposite direction.
Note that the same holds for $\alpha_{u^\perp}$ if $u$ is not parallel to an edge of $\alpha$.
Similarly, the same holds for $\alpha_u$ if $u$ is not perpendicular to an edge of $\alpha$.
Therefore, $\tc{\alpha_{u^\perp}}=\tc{\alpha_u}=\pi$ for almost all $u\in\mathbb{S}^2$.
In particular, $\overline{\tc{\alpha_{u^\perp}}}=\overline{\tc{\alpha_u}}=\pi$ which proves the statement.
\end{proof}

The original version of Milnor's proof 
used the identity $\tc\alpha=\overline{\tc{\alpha_u}}$;
in Fenner's version of the proof, it was hidden under the rug.

Fáry's proof is based on the identity $\tc\alpha=\overline{\tc{\alpha_{u^\perp}}}$ and the following inequality for total curvature.
Suppose $\alpha=p_1\dots p_n$ is a simple closed polygonal curve in $\mathbb{R}^3$ and $o\notin\alpha$.
Let us define the \emph{angular length} of $\alpha$ with respect to $o$ as the sum
\[\Psi_o(\alpha)=\measuredangle p_{1} o p_{2}+\dots+\measuredangle p_{n-1} o p_{n}+\measuredangle p_{n} o p_{1}.\]

\begin{Proposition}\label{prop:angular-length}
For any simple closed polygonal curve and any $o\notin\alpha$, we have 
\[\Psi_o(\alpha)\le \tc{\alpha}.\]
\end{Proposition}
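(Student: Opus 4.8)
The plan is to prove, edge by edge, an identity that expresses the angular contribution of an edge through angles between edge directions and directions-to-vertices, and then to sum it around the curve so that most terms cancel. Write $w_i=(p_i-o)/|p_i-o|$ for the unit direction from $o$ to the vertex $p_i$, and $v_i=(p_{i+1}-p_i)/|p_{i+1}-p_i|$ for the unit direction of the edge $[p_i,p_{i+1}]$, with indices read cyclically so that $p_{n+1}=p_1$. (For intuition: $\Psi_o(\alpha)$ is the length of the radial projection of $\alpha$ to the unit sphere centered at $o$, each edge mapping to a great-circle arc of length $\measuredangle p_i o p_{i+1}$; but the argument below does not rely on this picture.)

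First, apply the identity ``the angles of a triangle sum to $\pi$'' to the triangle $\solidtriangle op_ip_{i+1}$, which is legitimate because $o\notin\alpha$ forces $o\notin[p_i,p_{i+1}]$. The angle at $o$ is $\measuredangle p_i o p_{i+1}$, the angle at $p_i$ equals $\measuredangle(-w_i,v_i)=\pi-\measuredangle(w_i,v_i)$, and the angle at $p_{i+1}$ equals $\measuredangle(-w_{i+1},-v_i)=\measuredangle(w_{i+1},v_i)$; hence
\[
\measuredangle p_i o p_{i+1}=\measuredangle(w_i,v_i)-\measuredangle(w_{i+1},v_i).
\]
The degenerate cases, where $o$ lies on the line through $p_i$ and $p_{i+1}$ but off the segment, are checked by hand and obey the same identity.

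Second, sum this over $i=1,\dots,n$. After the reindexing $\sum_i\measuredangle(w_{i+1},v_i)=\sum_i\measuredangle(w_i,v_{i-1})$ we obtain
\[
\Psi_o(\alpha)=\sum_{i=1}^n\bigl(\measuredangle(w_i,v_i)-\measuredangle(w_i,v_{i-1})\bigr).
\]
Finally, the triangle inequality for the angle between unit vectors gives $\measuredangle(w_i,v_i)\le\measuredangle(w_i,v_{i-1})+\measuredangle(v_{i-1},v_i)$, so the $i$th summand is at most $\measuredangle(v_{i-1},v_i)=\phi_i$, the external angle of $\alpha$ at $p_i$. Summing yields $\Psi_o(\alpha)\le\phi_1+\dots+\phi_n=\tc\alpha$.

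There is no real obstacle here; the one step demanding care is the \emph{cyclic reindexing}, which is exactly what makes the ``far endpoint'' angle of each edge cancel the ``near endpoint'' angle of the next, leaving a single difference at each vertex that one external angle can absorb. It is worth noting that simplicity of $\alpha$ is never used — only that $\alpha$ is closed and $o\notin\alpha$ — and that the bound is sharp, with equality when $o$ is encircled by $\alpha$ inside a plane; this is the situation in which the proposition is applied once it is combined with the Crofton-type formula.
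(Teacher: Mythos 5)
Your proof is correct and takes essentially the same route as the paper's: both rest on the angle sum of the planar triangle $op_ip_{i+1}$, the triangle inequality for angles between spatial directions at each vertex, and a telescoping cancellation around the closed curve. The only difference is bookkeeping --- you first derive the exact identity $\Psi_o(\alpha)=\sum_i\bigl(\measuredangle(w_i,v_i)-\measuredangle(w_i,v_{i-1})\bigr)$ and then bound each summand by $\phi_i$, whereas the paper applies the triangle inequality vertex by vertex in the form $\phi_i\ge\psi_i+\theta_{i-1}-\theta_i$ and telescopes the auxiliary angles $\theta_i$ after summing.
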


\begin{figure}[!ht]
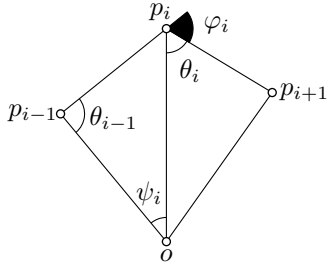

\vskip-0mm
\centering
\arXiv{\includegraphics{mppics/pic-15}}{\includegraphics{pic-15}}
\caption{Definitions of angles.}
\label{fig4}
\vskip0mm
\end{figure}

\begin{proof}
Let $\alpha=p_1\dots p_n$; for each $i$, set 
\begin{align*}
\phi_i&=\pi-\measuredangle p_{i-1}p_ip_{i+1},
&
\psi_i&=\measuredangle p_{i-1} o p_{i},
&
\theta_i&=\measuredangle o p_i p_{i+1};
\end{align*}
see Figure~\ref{fig4}.
Here we assume that the indices are taken modulo $n$; in particular, $p_{n}\z=p_0$.

Note that $\phi_i$ is the external angle at $p_i$;
therefore 
\[\tc\alpha= \phi_1+\dots+\phi_n.\]
The directions of $p_i-p_{i-1}$, $o-p_i$, and $p_{i+1}-p_i$ make angles 
$\psi_i+\theta_{i-1}$, $\theta_i$, and $\phi_i$ to each other.
Applying the triangle inequality for these angles, we get
\[\phi_i\ge \psi_i+\theta_{i-1}-\theta_i.\]
Summing up, we get
\[\phi_1+\dots+\phi_n\ge \psi_1+\dots+\psi_n,\]
and the result follows.
\end{proof}

\begin{proof}[Proof of \arXiv{the main theorem}{Main Theorem}]
Consider a projection $\alpha_{u^\perp}$ of the knot $\alpha$ to a plane in \emph{general position}
(\arXiv{this time it means}{meaning} that the self-intersections of the projection are at most double points and the projection of each edge is not degenerate).
The closed polygonal curve $\alpha_{u^\perp}$ divides the plane into domains, one of which is unbounded, denoted by $U$, and the others are bounded.

\begin{figure}[!ht]
\begin{minipage}{.48\textwidth}
\centering
\arXiv{\includegraphics{mppics/pic-13}}{\includegraphics{pic-13}}
\end{minipage}\hfill
\begin{minipage}{.48\textwidth}
\centering
\arXiv{\includegraphics{mppics/pic-14}}{\includegraphics{pic-14}}
\end{minipage}

\medskip

\begin{minipage}{.48\textwidth}
\centering
\caption{Projection of an unknot.}
\label{fig5}
\end{minipage}\hfill
\begin{minipage}{.48\textwidth}
\centering
\caption{Projection of a knot.}
\label{fig6}
\end{minipage}
\vskip-0mm
\end{figure}

First, note that all domains can be colored in a chessboard order;
that is, they can be colored in black and white in such a way that domains with common borderline get different colors \cite[Exercise 2.27]{adams}.
If the unbounded domain is colored white and every other domain is colored black (see Figure~\ref{fig5}), then one can untie the knot by flipping these domains one by one.%
\footnote{It is instructive to give a formal proof of the last statement; that is, \textit{show that if there is only one white region, then $\alpha$ is trivial}.}

Therefore, there is a white bounded domain; denote it by $D$ (see Figure~\ref{fig6}).
The domain $D$ cannot adjoin 
$U$, since they have the same color.
Fix a point $o$ in this domain.

Since any ray from $o$ crosses $\alpha_{u^\perp}$ twice, we get $\Psi_o(\alpha_{u^\perp})\ge 4\arXiv{{\cdot}}{}\pi$;
that is, the angular length of $\alpha_{u^\perp}$ with respect to $o$ is at least $4\arXiv{{\cdot}}{}\pi$. 
By Proposition~\ref{prop:angular-length}, we have 
\[\tc{\alpha_{u^\perp}}\ge4\arXiv{{\cdot}}{}\pi.\]
This is true for any $u$ in general position.
The remaining directions contribute nothing to the average value.
It remains to apply the Crofton-type formula $\tc{\alpha}=\overline{\tc{\alpha_{u^\perp}}}$.
\end{proof}

\section{Pannwitz--Hopf--Schmitz--Denne.}\label{sec:quadrisecant}

Fáry's paper \cite{fary} is ended with the following note:
``I just received a letter from Mr. Borsuk, which says that Theorem 3 (\textit{our \arXiv{main theorem}{Main Theorem}}) has been proved independently by Mr. H. Hopf.
It uses the theorem of Miss Pannwitz, which ensures that for any knot there is a line that crosses it at least in four
points.''
This proof is the subject of this section.

The main step in the proof is the existence of the so-called \emph{alternated quadrisecant} of a knot $\alpha$; that is, there is a line $\ell$ that shares with $\alpha$ four points $a$, $b$, $c$, and $d$
that appear on $\ell$ in the same order and have cyclic order $a$, $c$, $b$, $d$ on $\alpha$.
See Figure~\ref{fig7}.

\begin{figure}[!ht]
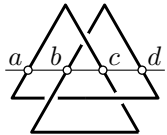

\vskip-0mm
\centering
\arXiv{\includegraphics{mppics/pic-41}}{\includegraphics{pic-41}}
\caption{Alternated quadrisecant.}
\label{fig7}
\vskip0mm
\end{figure}

The question about the existence of a quadrisecant was posed by Otto Toep\-litz and answered by Erika Pannwitz for tame knots in \emph{general position} \cite{pannwitz}.
She only proved the existence of a quadrisecant, but the existence of an alternated quadrisecant can be extracted from her proof.
Elizabeth Denne had generalized this result to all knots \cite{denne, denne-survey}.
But for us tame knots in general position will be sufficient;
namely, we need the following.

\begin{Proposition}\label{prop:quadrisecant}
Any nontrivial knot $\alpha$ in general position admits an \emph{alternated quadrisecant}.
\end{Proposition}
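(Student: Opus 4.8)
The plan is to follow the approach of Pannwitz, in the streamlined form due to Denne, built around the notion of an \emph{essential secant}. Given points $p\ne q$ on the knot $\alpha$ together with a choice of one of the two subarcs $\gamma$ of $\alpha$ joining them, the chord $[p,q]$ and $\gamma$ form a closed loop; call the triple $(p,q,\gamma)$ \emph{essential} if this loop, after a small generic perturbation pushing it off the complementary arc $\alpha\setminus\gamma$, is not null-homotopic in the complement of that complementary arc, where the latter is first extended to a properly embedded line by attaching the two rays of the line $pq$ issuing from $p$ and from $q$ (this extension is what makes the ambient complement non-simply-connected, so the notion has content). For the trivial knot no secant is essential, and the proposition will follow once we show that nontriviality forces a chain of essential configurations ending in an alternated quadrisecant.

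\textbf{Step 1 (an essential secant exists, through every point).} Fix $p_0\in\alpha$ and suppose, for contradiction, that no chord from $p_0$ is essential for either choice of subarc. Cut $\alpha$ at $p_0$ and at one further point; inessentiality of all the resulting chords lets one homotope --- and then, using general position, isotope --- each piece across an embedded disk meeting the rest of $\alpha$ only along its boundary, reducing the number of edges. Iterating brings $\alpha$ to a triangle, contradicting nontriviality. (This is where the definition of a nontrivial knot from the introduction is really used.)

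\textbf{Step 2 (promote to a trisecant, then a quadrisecant).} Take an essential $(a,c,\gamma)$ and slide the endpoint along $\gamma$: among $x\in\gamma$ for which $(a,x,\gamma_x)$ is still essential, let $b$ be the extreme position toward $c$. The space of secant lines of the polygonal curve $\alpha$ is compact, and essentiality is preserved under the relevant limits, so $b$ is attained and $(a,b,\gamma_b)$ is essential. If $b\ne c$, then just past $b$ essentiality is lost, which --- in general position --- can only happen because the chord $[a,b]$ has swept across a point of $\alpha\setminus\gamma$; hence $[a,b]$ contains a third point of $\alpha$ between $a$ and $b$, and a short homotopy argument shows the two resulting sub-chords are themselves essential. (If instead the extreme position reaches $c$, transversality forces $[a,c]$ to have already contained an interior intersection, again producing a trisecant.) Applying the same extremal device a second time --- now sliding one endpoint of the essential trisecant --- yields a fourth collinear point $d$, so that along the line $\alpha$ is met in four points whose three consecutive sub-chords are each essential.

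\textbf{Step 3 (essentiality forces the alternating order), and the main obstacle.} It remains to see that the cyclic order of the four points along $\alpha$ is $a,c,b,d$ rather than the simple order $a,b,c,d$ or the remaining non-alternating type. If the order along $\alpha$ were simple, then the subarc of $\alpha$ from $a$ to $b$ avoiding $c$ and $d$, together with the chord $[b,a]$, would lie to one side of a plane through the quadrisecant line and be separated from the rest of $\alpha$; that loop then contracts in the extended complement, contradicting essentiality of the edge $[a,b]$. A short case analysis of the same flavour discards every non-alternating cyclic order, leaving precisely the alternated quadrisecant. I expect \textbf{Step 2} to be the genuine difficulty: formulating the right compactness statement for the secants of a polygonal knot and, above all, the semicontinuity of essentiality under limits of secants --- this is exactly what stops the sliding endpoint from escaping without ever producing a third intersection point. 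A secondary but unavoidable chore, present in Steps 2 and 3 alike, is to keep careful track of which subarc each essentiality statement refers to, so that these hypotheses compose correctly as one passes from secant to trisecant to quadrisecant; general position is used throughout to ensure that ``trisecant'' and ``quadrisecant'' really denote four distinct transverse intersections with non-degenerate edges.
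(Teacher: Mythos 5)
Your plan follows Denne's essential-secant route, which is a genuinely different approach from the paper's. The paper (following Pannwitz, as presented by Schmitz) works directly in the cylinder $\mathbb{S}^1\times(0,1)$ of oriented chords: it defines $C_3^\pm$ as the chords whose backward ray meets one or the other of the two subarcs, observes that an alternating quadrisecant is exactly a chord in $C_3^+\cap C_3^-$, and then --- assuming the closures $\bar C_3^\pm$ are disjoint --- builds from a separating polygonal curve in the cylinder a piecewise-linear disk spanning $\alpha$ with interior missing $\alpha$, contradicting nontriviality via Lemma~\ref{lem5}. That lemma is where the loop theorem enters, and the paper is explicit about it.

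Your Step~1 hides exactly that piece of 3-manifold topology, and that is a genuine gap. If every secant from $p_0$ is inessential, what inessentiality gives you is a null-homotopy of each pushed-off loop in the complement of the rest of the knot, i.e.\ a \emph{singular} disk; upgrading that to an \emph{embedded} disk meeting $\alpha$ only along its boundary is precisely Dehn's lemma (equivalently, the loop theorem), and ``using general position'' does not produce it --- generic perturbation does not remove the self-intersections of a singular disk in a 3-manifold. Denne's own proof that every point of a nontrivial knot lies on an essential secant invokes Dehn's lemma for this reason. If you replaced the offending phrase with an appeal to the paper's Lemma~\ref{lem5} or to Dehn's lemma directly, Step~1 would be sound. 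Steps~2 and~3 are correct in outline: you have identified the right extremal/sliding device and the right reason essentiality forces the alternating cyclic order, and you honestly flag that the compactness of the secant space and the semicontinuity of essentiality are the real labor; those are filled in carefully in Denne's thesis, and I would count them as acknowledged work rather than errors.
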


The precise meaning of general position will be clear from the proof;
what is important is that any knot is arbitrarily close to a knot in general position.

\begin{proof}[Proof of \arXiv{the main theorem}{Main Theorem} modulo Proposition~\ref{prop:quadrisecant}]
Let $\alpha=p_1\dots p_n$ be a knot in general position.
Suppose $a$, $b$, $c$, and $d$ be the points as in the definition of an alternated quadrisecant.
Then $acbd$ is an inscribed quadrangle with all external angles equal to~$\pi$.
Therefore, $\tc{abcd}=4\arXiv{{\cdot}}{}\pi$.

Since the total curvature of an inscribed polygonal curve cannot be larger than the total curvature of the original curve, we get that $\tc{\alpha}\z\ge 4\arXiv{{\cdot}}{}\pi$.

Finally, for any knot $\beta=q_1\dots q_n$ there is an arbitrarily close knot $\alpha\z=p_1\dots p_n$ in general position;
in particular, for any $\eps>0$ we can assume that $\tc{\beta}>\tc\alpha-\eps$.
It follows that $\tc{\beta}>4\arXiv{{\cdot}}{}\pi-\eps$ for any positive $\eps$ --- hence the result.
\end{proof}

In the proof of the proposition, we will use the following characterization of the trivial knot.

\begin{Lemma}\label{lem5}
A knot $\alpha$ is trivial if there exists a piecewise linear map $F$ from the disc $\DD$ to $\RR^3$ such that the  restriction of $F$ to the boundary $\partial\DD=\mathbb{S}^1$ is a degree-one map to $\alpha$ and $F$ does not map interior points of $\DD$ to $\alpha$.
\end{Lemma}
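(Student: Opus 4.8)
The plan is to read off an explicit sequence of triangular isotopies from the singular disc, peeling it one triangle at a time. First I would normalize $F$: fix a triangulation of $\DD$ on which $F$ is affine and put $F$ in general position, so that it is non-degenerate on each triangle and the self-intersection set of $F(\DD)$ has dimension at most one. Since $F(\mathrm{int}\,\DD)$ is disjoint from $\alpha=F(\partial\DD)$ --- this is exactly the hypothesis on interior points --- that self-intersection set is disjoint from $\alpha$. Subdividing if necessary, I would also arrange that $F$ maps $\partial\DD$ homeomorphically onto $\alpha$, carrying each boundary edge of $\DD$ onto an edge of $\alpha$; a non-injective degree-one boundary map has a ``fold'' which can be cut off before starting.

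Now induct on the number $N$ of triangles. If $N=1$, then $\alpha=F(\partial\DD)$ is the boundary of a non-degenerate solid triangle, hence trivial. If $N>1$, peel off a boundary triangle $T=\solidtriangle uvw$ chosen so that $\DD\setminus T$ is again a disc; two cases occur. If $T$ meets $\partial\DD$ in a single edge $[u,v]$ (so that its opposite vertex $w$ is an interior vertex of $\DD$), then $F(T)$ is a solid triangle meeting $\alpha$ in exactly the edge $F([u,v])$ --- the interior of $T$ and the relative interiors of $[u,w]$ and $[w,v]$ are mapped off $\alpha$, and $F(w)\notin\alpha$ --- so one may apply a triangular isotopy replacing the edge $F([u,v])$ of $\alpha$ by the two edges $F([u,w]),F([w,v])$. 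If instead $T$ meets $\partial\DD$ in two adjacent edges $[u,v],[v,w]$, the same bookkeeping shows $F(T)$ meets $\alpha$ in exactly $F([u,v])\cup F([v,w])$, so one may apply the inverse triangular isotopy replacing these two edges by $F([u,w])$. In either case the new knot $\alpha'$ is isotopic to $\alpha$, equals $F(\partial(\DD\setminus T))$, and is bounded by the singular disc $F|_{\DD\setminus T}$ with $N-1$ triangles; the inductive hypothesis applied to it gives that $\alpha'$, hence $\alpha$, is trivial.

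The step that requires care --- and the main obstacle --- is verifying that $F|_{\DD\setminus T}$ again satisfies the hypothesis of the lemma, i.e.\ that the interior of $\DD\setminus T$ still misses the new knot $\alpha'$. The only possible violations sit on the newly created edges, which lie inside $F(T)$, and they can occur only at self-intersections of $F(\DD)$; in particular, when $F$ is an embedding the identity $F(A)\cap F(B)=F(A\cap B)$ makes every peeling step legitimate and the induction runs without a hitch. In the general case the hypothesis can genuinely fail, and one must work harder --- for instance by enlarging the inductive statement to keep track of $F^{-1}(\alpha)$ (a union of arcs and circles in $\DD$ after general position, with $\partial\DD$ among its components) and removing the extraneous components as they are created by an innermost-piece argument, available because $F^{-1}(\alpha)$ avoids the double set of $F$, or by choosing the peeled triangle at each stage so as not to create them. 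This bookkeeping is the only real difficulty; the geometric content is entirely contained in the two elementary peeling moves above.
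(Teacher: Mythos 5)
The paper does not attempt to prove this lemma; it simply notes that the statement can be deduced from the \emph{loop theorem} of 3-manifold topology and recommends taking it for granted if one is unfamiliar with that machinery. Your proposal instead tries to prove it directly, by triangulating the singular disc and peeling off boundary triangles one at a time, reading off a chain of triangular isotopies. This does work when $F$ is an embedding, as you observe, but the general case has a genuine gap, which you locate correctly and then dismiss as ``bookkeeping''; it is not.

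When you peel a boundary triangle $T$, the newly created edges of $\alpha'$ lie on $F(T)$, and the self-intersection set of the PL disc is a one-complex that in general crosses those edges. Then $(F|_{\DD\setminus T})^{-1}(\alpha')$ acquires components in the interior, so the inductive hypothesis fails; worse, in that case $F(T)$ meets $\alpha$ away from the shared edges (another sheet of the disc passes through it), so the peeling move is not even a legal triangular isotopy to begin with. The remedies you gesture at --- an innermost-piece argument on $(F|_{\DD\setminus T})^{-1}(\alpha')$, or choosing the peeled triangle cleverly --- are exactly the naive strategies that are known \emph{not} to desingularize a PL disc: rerouting along an innermost piece alters the map elsewhere and can create new double curves, and one can cycle. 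This is precisely the historical gap in Dehn's 1910 argument for Dehn's lemma, found by Kneser in 1929; the problem then stayed open until Papakyriakopoulos's 1957 tower construction, which is what the loop theorem cited in the paper encapsulates. In short, the step you set aside as bookkeeping is the entire content of the lemma, and an inductive peeling argument cannot close it without in effect reproving Dehn's lemma.
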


This statement can be deduced from  the \textit{loop theorem} --- a heavy weapon of 3-dimensional topology.
For those who are familiar with the loop theorem, it would be an exercise; otherwise, we suggest taking it for granted.
In the following proof, we follow closely the presentation of Carsten Schmitz \cite{schmitz} and the original argument of Erika Pannwitz.

\begin{proof}[Proof of Proposition~\ref{prop:quadrisecant}]
We may assume that $\alpha$ comes with a 1-periodic piecewise linear parametrization by $\RR$;
so the space of oriented chords of $\alpha$ can be identified with the open cylinder $\mathbb{S}^1\times (0,1)$, where $\mathbb{S}^1=\RR/\ZZ$.
Namely, we assume that a pair $(x,y)\in \mathbb{S}^1\times (0,1)$ corresponds to the oriented chord with the ends at $\alpha(x)$ and $\alpha(x+y)$.

Choose a pair $(x,y)\in \mathbb{S}^1\times (0,1)$.
Let us denote by $r(x,y)$ the ray that starts at $\alpha(x)$ and goes in the direction opposite to $\alpha(x+y)$; see Figure~\ref{fig8}.
We write $(x,y)\z\in C_3$ if $r(x,y)$ crosses $\alpha$ at another point.

The points $\alpha(x)$ and $\alpha(x+y)$ divide the knot into two open arcs $\alpha|_{(x,x+y)}$ and $\alpha|_{(x+y,x+1)}$.
If $(x,y)\z\in C_3$ and $r(x,y)$ crosses the second arc, then we write $(x,y)\in C_3^+$;
if it crosses the first arc, then $(x,y)\z\in C_3^-$.
Note that $C_3^+\cup C_3^-\z=C_3$.

\begin{figure}[!ht]
\begin{minipage}{.48\textwidth}
\centering
\arXiv{\includegraphics{mppics/pic-43}}{\includegraphics{pic-43}}
\end{minipage}\hfill
\begin{minipage}{.48\textwidth}
\centering
\arXiv{\includegraphics{mppics/pic-46}}{\includegraphics{pic-46}}
\end{minipage}

\medskip

\begin{minipage}{.48\textwidth}
\centering
\caption{Ray $r(x,y)$.}
\label{fig8}
\end{minipage}\hfill
\begin{minipage}{.48\textwidth}
\centering
\caption{Colored set.}
\label{fig9}
\end{minipage}
\vskip-0mm
\end{figure}

Observe that if $C_3^+$ and $C_3^-$ intersect, then the proposition follows.
In general, the set $C_3^\pm$ is not closed;
denote by $\bar C_3^\pm$ its closure.
Suppose 
\[\bar C_3^+\cap \bar C_3^-=\emptyset.\]
Note that in this case there is a large $n$ such that any $\tfrac1n\times\tfrac1n$-square in $\mathbb{S}^1\times (0,1)$ does not intersect both $\bar C_3^+$ and $\bar C_3^-$.
Let us cut $\mathbb{S}^1\times (0,1)$ into $n^2$ such squares; each square is a closed subset of $\mathbb{S}^1\times (0,1)$.
Color the union of squares that intersect $\bar C_3^+$; see Figure~\ref{fig9}.
Note that every square in the lowest row is colored and that we did not color squares in the upper row.
Further, the boundary of the colored set contains a simple curve $t\mapsto(x(t),y(t))$ that cuts  the cylinder $\mathbb{S}^1\times (0,1)$ into two cylinders.

Note that $(x(t),y(t))\notin \bar C_3^\pm$ for any $t\in \mathbb{S}^1$ and the curve $t\mapsto(x(t),y(t))$ runs along coordinate lines.
Consider the one-parameter family of line segments in $r(x(t),y(t))$ that start at $\alpha(x(t))$ and end on the surface of a large cube that contains $\alpha$ in its interior.
In this way we obtain a piecewise linear annulus that connects the curve $t\mapsto \alpha(x(t))$ to a curve on the surface of the cube.
The latter curve can be contracted by a piecewise linear disc in the surface of the cube.
It might have self-intersection, but it cannot contains points of $\alpha$.

Observe that $t\mapsto \alpha(x(t))$ defines a degree-one map $\mathbb{S}^1\z\to\alpha$.
Applying Lemma~\ref{lem5}, we get the result.

It remains to show that if $\alpha=p_1\dots p_n$ is in general position, then
\[C_3^+\cap C_3^-=\emptyset\quad\text{implies that}\quad
\bar C_3^+\cap \bar C_3^-=\emptyset.\]
Assume $C_3^+\cap C_3^-=\emptyset$ and $(x,y)\in \bar C_3^+\cap \bar C_3^-$.
Denote by $\ell$ the line containing $\alpha(x)$ and $\alpha(x+y)$.
Checking the following statements is straightforward, but requires patience:
\begin{itemize}
 \item $\alpha(x)$ is a vertex, so $\alpha(x)=p_i$ for some $i$ and $\ell$ contains $p_i$;
 \item $\ell$ lies in the plane spanned by $p_{i-1}$, $p_i$, and $p_{i+1}$; 
 \item $\ell$ does not contain edge $[p_{i-1}, p_i]$, nor $[p_{i}, p_{i+1}]$;
 \item $\ell$ has at least 3 points of intersection with $\alpha$.
\end{itemize}
Finally, if $\alpha$ is in general position,
the line $\ell$ with the described properties does not exist.
\end{proof}

\section{Alexander--Bishop.}

Here we sketch the proof given by Stephanie Alexander and Richard Bishop \cite{alexander-bishop}.
This proof was designed to work for more general ambient spaces.
As a result, it is more elementary.

In the proof, we construct a total-curvature-decreasing deformation of a given knot into a doubly covered
bigon.
The statement follows since the latter has total curvature $4\arXiv{{\cdot}}{}\pi$.

\begin{proof}[Proof of \arXiv{the main theorem}{Main Theorem}.]
Let $\alpha=p_1\dots p_n$ be a nontrivial knot;
that is, one cannot get a triangle from $\alpha$ by applying a sequence of triangular isotopies defined in the introduction.

If $n=3$ the polygonal curve $\alpha$ is a triangle.
Therefore, by definition, $\alpha$ is a trivial knot --- there is nothing to show.

Consider the smallest $n$ for which the statement fails;
that is, there is a nontrivial knot $\alpha\z=p_1\dots p_n$ such that
\[\tc\alpha<4\arXiv{{\cdot}}{}\pi.
\arXiv{\leqno({*})}{\eqno(1)}\]
We use the indices modulo $n$; that is, $p_0=p_n$, $p_1=p_{n+1}$, and so on.
Without loss of generality, we may assume that $\alpha$ is in \emph{general position}; 
this time it means that no four vertices of $\alpha$ lie on one plane. 

Set $\alpha_0=\alpha$.
If the solid triangle $\solidtriangle p_{0}p_1p_{2}$ intersects $\alpha_0$ only in the two adjacent edges,
then applying the corresponding triangular isotopy, we get a knot $\alpha'_0$ with $n-1$ edges that is inscribed in $\alpha_0$.
Therefore,
\[\tc{\alpha_0}\ge \tc{\alpha_0'}.\]
On the other hand, by minimality of $n$, 
\[\tc{\alpha_0'}\ge 4\arXiv{{\cdot}}{}\pi,\]
which contradicts $\arXiv{({*})}{(1)}$.

\begin{figure}[!ht]
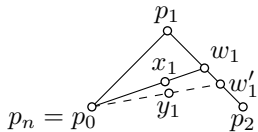

\vskip-0mm
\centering
\arXiv{\includegraphics{mppics/pic-17}}{\includegraphics{pic-17}}
\caption{Construction of $w_1$.}
\label{fig10}
\vskip0mm
\end{figure}

Let $w'_1$ be the first point on the edge $[p_1,p_2]$ such that the line segment $[p_0,w'_1]$ 
intersects $\alpha_0$, say at $y_1$. 

Choose a point $w_1$ on $[p_1,p_2]$ a bit before $w'_1$.
Denote by $x_1$ the point on $[p_0,w_1]$ that minimizes the distance to $y_1$.
In this way we get a closed polygonal curve 
$\alpha_1\z=w_1p_2\dots p_n$ with two marked points $x_1$ and $y_1$; see Figure~\ref{fig10}.
Denote by $m_1$ the number of edges in the arc $x_1w_1\dots y_1$ of $\alpha_1$.

Note that $\tc{\alpha_0}\ge \tc{\alpha_1}$, and
$\alpha_1$ is isotopic to $\alpha_0$;
in particular, $\alpha_1$ is a nontrivial knot.
Moreover, the point $w_1$ can be chosen so that $\alpha_1$ is in general position.

Now let us repeat the procedure for the adjacent edges $[w_1,p_2]$ and $[p_2,p_3]$ of $\alpha_1$.
If the solid triangle $\solidtriangle w_1p_2p_3$ intersects $\alpha_1$ only at these two adjacent edges, then we get a contradiction the same way as before.
Otherwise, we get a new knot $\alpha_2\z=w_1w_2p_3\dots p_n$ with two more marked points $x_2$ and $y_2$.
Denote by $m_2$ the number of edges in the polygonal curve $x_2w_2\dots y_2$.

Note that the points $x_1,x_2,y_1,y_2$ cannot appear on $\alpha_2$ in the same cyclic order;
otherwise the polygonal curve $x_1x_2y_1y_2$ can be made to be arbitrarily close to a doubly covered bigon which again contradicts~$\arXiv{({*})}{(1)}$.

Therefore, we can assume that the arc $x_2w_2\dots y_2$ lies inside the arc $x_1w_1\dots y_1$ in $\alpha_2$
and therefore $m_1>m_2$.

Continuing this procedure we get a sequence of polygonal curves $\alpha_i\z=w_1\z\dots w_i p_{i+1}\z\dots p_n$ with marked points $x_i$ and $y_i$ such that the number of edges $m_i$ from $x_i$ to $y_i$ decreases as $i$ increases.
Clearly $m_i>1$ for any $i$ and $m_1<n$.
Therefore, it requires fewer than $n$ steps to arrive at a contradiction.
\end{proof}

\section{Ekholm--White--Wienholtz.}

In this section, we discuss a solution of the problem based on the theorem of Tobias Ekholm, Brian White, and Daniel Wienholtz \cite{EWW_embed}.
This theorem was a breakthrough in minimal surface theory at the time.
Yet it was based on an elementary idea that we are going to explain.

We start with a polygonal curve $\alpha$ with total curvature less than $4\arXiv{{\cdot}}{}\pi$;
show that an area-minimizing disc spanned by $\alpha$ has no self-intersections, and therefore $\alpha$ has to be a trivial knot.
So in a way the equation for area-minimizing surfaces solves our problem; we only need to understand~it.

The main hero in this proof is the so-called extended monotonicity theorem.
We will also apply the Douglas--Rado theorem on the existence of area-minimizing discs and reuse the inequality between total curvature and angular length from Fáry's proof; see Proposition~\ref{prop:angular-length}.

The image of a map from a domain of $\mathbb{R}^2$ to $\mathbb{R}^3$ will be called a \emph{surface};
it might have self-intersections and singularities, but we assume it is reasonable, say locally Lipschitz \cite{wiki:lipschitz}; so we can talk about its area.
A point on the surface might refer to a point in $\mathbb{R}^3$, or to the corresponding point in the domain of parameters in $\mathbb{R}^2$;
it should be easy to infer from the context.

We denote by $\mathbb{D}$ the closed disc in the plane.
A surface defined by a map $f\colon\mathbb{D}\to\mathbb{R}^3$ will be called a \emph{disc}.
The restriction $f|_{\partial \mathbb{D}}$ as well as its image can be referred to as the \emph{boundary} of the disc.

A disc $\Sigma$ is called area-minimizing if it has the smallest area among the discs with the given boundary.
The following statement about area-minimizing discs is easy to believe, but not easy to prove; see \cite{white-lectures}.

\begin{theorem}[Douglas--Rado]\label{thm:min-exists} 
Given a simple closed polygonal curve $\alpha$ in $\mathbb{R}^3$, there is an area-minimizing disc $\Sigma$ with boundary $\alpha$; it is a smooth surface, possibly with self-intersections and isolated singularities.
Moreover, if $\Sigma$ has no self-intersections, then it is an embedded smooth surface with no singularities (in this case $\alpha$ is a trivial knot).
\end{theorem}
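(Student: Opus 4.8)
The plan is to treat the three assertions of the theorem separately: (i) existence of an area-minimizing disc spanning $\alpha$; (ii) its regularity, away from isolated branch points; and (iii) the implication that, when the disc has no self-intersections, it is an embedded smooth surface and $\alpha$ is unknotted. Only (iii) is elementary; for (i) and (ii) I would lay out the structure of the classical arguments and quote the two deep analytic inputs rather than reprove them.

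For (i), the first point is that one cannot minimize area directly: the area functional is invariant under reparametrization of the disc, so minimizing sequences need not be compact, and it is not lower semicontinuous in any convenient function space. Instead, following Douglas and Rado, I would minimize the Dirichlet energy $E(f)=\tfrac12\int_{\DD}|\nabla f|^2$ over maps $f\in W^{1,2}(\DD,\RR^3)$ whose boundary trace is a weakly monotone parametrization of $\alpha$, normalized by a three-point condition that kills the conformal automorphisms of $\DD$. Three facts make this work: $E$ is weakly lower semicontinuous; one always has $E(f)\ge\area(f)$, with equality precisely for weakly conformal $f$; and, by Morrey's $\eps$-conformality lemma, every disc spanning $\alpha$ can be reparametrized to make its energy arbitrarily close to its area, so that $\inf E=\inf\area$. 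The genuinely delicate point is compactness of the admissible boundary parametrizations along an energy-minimizing sequence, which is supplied by the Courant--Lebesgue lemma: a length estimate on small circular arcs yields equicontinuity of the boundary traces. A weak limit $f_\infty$ then satisfies $E(f_\infty)=\inf\area$ and $\area(f_\infty)\le E(f_\infty)$, so it is area-minimizing and weakly conformal; being an energy minimizer, it is harmonic.

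For (ii), ``harmonic and weakly conformal'' is exactly the system satisfied by a branched minimal immersion, so $f_\infty$ is a smooth minimal immersion off a discrete set of branch points, and boundary regularity results give smoothness up to $\alpha$ away from its vertices (the vertices of $\alpha$ and the possible branch points being the ``isolated singularities'' in the statement). To remove the branch points one invokes the regularity theory of \emph{area minimizers}: Osserman ruled out interior true branch points, and Gulliver and Alt ruled out the false ones as well. I would nominate this branch-point analysis as the main obstacle; it is the geometric-analytic heart of the whole theorem. Granting it, the argument closes quickly: a true branch point is a point of local non-injectivity, hence a self-intersection of $\Sigma$, so if $\Sigma$ has no self-intersections --- i.e. $f_\infty$ is injective --- then it has no branch points of either kind, $f_\infty$ is a smooth immersion, and an injective immersion of the compact disc is a smooth embedding.

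For (iii), suppose $\Sigma$ is embedded; then in particular the interior of $\Sigma$ is disjoint from its boundary $\alpha$. Triangulate $\Sigma$ finely, taking the vertices of $\alpha$ among the boundary vertices; for a sufficiently fine triangulation this produces a piecewise linear map $F\colon\DD\to\RR^3$ whose boundary restriction is a degree-one parametrization of the polygon $\alpha$ and which sends no interior point of $\DD$ to $\alpha$. By Lemma~\ref{lem5}, $\alpha$ is a trivial knot, which completes the proof.
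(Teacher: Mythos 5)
The paper does not actually prove this theorem: it is stated with the remark that it is ``easy to believe, but not easy to prove'' and is handed off wholesale to White's lecture notes \cite{white-lectures}. So there is no in-text argument to compare against, and your proposal is supplying structure the paper deliberately omits. With that understood, your outline is a fair and essentially standard rendering of the Douglas--Rad\'o--Morrey existence scheme together with the Osserman--Gulliver--Alt branch-point regularity, and your closing step for the unknottedness --- approximating the embedded smooth disc by a fine triangulation and invoking Lemma~\ref{lem5} --- is exactly the right way to tie the statement back to the paper's own toolkit, nicely linking the Ekholm--White--Wienholtz section to the Pannwitz--Hopf--Schmitz--Denne section.

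Two small cautions on the sketch. First, the area functional \emph{is} weakly lower semicontinuous on $W^{1,2}$ (it is polyconvex); the real obstruction to direct minimization is the lack of compactness of minimizing sequences caused by the conformal invariance of area, which the three-point normalization and the Courant--Lebesgue lemma are designed to repair --- you have the right mechanism but the wrong label on the disease. Second, the step ``a true branch point is a point of local non-injectivity, hence a self-intersection'' leans on reading ``$\Sigma$ has no self-intersections'' as ``the parametrizing map is injective''; that is the intended reading in the paper (a surface is the image of a map, and injectivity is what is used later), but it is worth flagging, since false branch points also destroy injectivity while leaving the image locally embedded. Once you adopt the injectivity reading consistently, your chain ``injective $\Rightarrow$ no branch points (Osserman, Gulliver, Alt) $\Rightarrow$ immersion $\Rightarrow$ injective immersion of a compact disc is an embedding'' closes cleanly, and the final application of Lemma~\ref{lem5} is correct.
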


Choose a disc $\Sigma$ in $\mathbb{R}^3$ with boundary $\alpha$.
Given a point $o\notin \alpha$, let us consider the \emph{collared} $\Sigma$ with respect to~$o$;
it is a new surface that will be denoted by $\hat\Sigma_o$;
it includes $\Sigma$ and the \emph{collar} formed by all rays that start at points of $\alpha$ and go in the direction opposite to $o$; see Figure~\ref{fig11}.
Note that $\hat\Sigma_o$ admits a natural parametrization by the whole plane.

\begin{figure}[!ht]
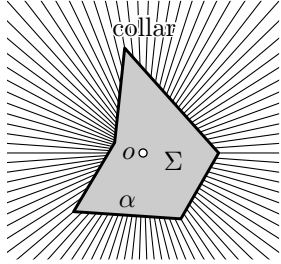

\vskip-0mm
\centering
\arXiv{\includegraphics{mppics/pic-51}}{\includegraphics{pic-51}}
\caption{Surface and its collar.}
\label{fig11}
\vskip0mm
\end{figure}

\arXiv{\begin{theorem}[extended monotonicity]}{\begin{theorem}[Extended Monotonicity]}
\label{thm:monotonicity}
Let $\Sigma$ be an area-minimizing disc with boundary~$\alpha$.
Given a point $o\notin \alpha$, consider the function 
\[W_o(r)=\area (\hat\Sigma_o\cap \bar B_r(o)),\]
where $\bar B_r(o)$ denotes the ball of radius $r$ centered at $o$.
Then $r\z\mapsto \frac{W_o(r)}{r^2}$ is a nondecreasing function.
Moreover, 

\noindent(a)
$\lim_{r\to\infty}\frac{W_o(r)}{r^2}=\tfrac12\arXiv{\cdot}{} \Psi_o(\alpha)$, where $\Psi_o(\alpha)$ denotes the angular length of $\alpha$ with respect to $o$; see Section~\ref{sec:fary};

\noindent(b) if $o\in \Sigma$, then $\lim_{r\to0}\frac{W_o(r)}{r^2}\ge \pi$.

\end{theorem}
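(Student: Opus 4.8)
The plan is to reduce everything to the single differential inequality
\[W_o(r)\le \tfrac r2\cdot\length(\hat\Sigma_o\cap\partial B_r(o))\qquad\text{for almost every }r>0.\]
Monotonicity is then immediate: applying the coarea formula to the distance function $x\mapsto|x-o|$ restricted to $\hat\Sigma_o$ gives $W_o'(r)\ge\length(\hat\Sigma_o\cap\partial B_r(o))$ for almost every $r$ (the tangential gradient of the distance function has length at most $1$), so the displayed inequality reads $W_o(r)\le\tfrac r2\,W_o'(r)$, i.e.\ $\tfrac{d}{dr}\log\tfrac{W_o(r)}{r^2}\ge 0$; since $W_o$ is absolutely continuous in $r$, it follows that $r\mapsto\tfrac{W_o(r)}{r^2}$ is nondecreasing.

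The collar is present precisely to make this inequality hold for \emph{every} $r$: the area-minimizing disc $\Sigma$ alone obeys the classical monotonicity formula only for balls $\bar B_r(o)$ disjoint from its boundary $\alpha$, and the collar repairs the defect caused by $\alpha$. I would split $\hat\Sigma_o\cap\bar B_r(o)$ into $\Sigma\cap\bar B_r(o)$ and the part of the collar inside $\bar B_r(o)$. Since $\Sigma$ is area-minimizing among discs spanning $\alpha$, a cone comparison at radius $r$ — replacing $\Sigma\cap\bar B_r(o)$ by the cone from $o$ over its boundary curve $(\alpha\cap\bar B_r(o))\cup(\Sigma\cap\partial B_r(o))$ — yields
\[\area(\Sigma\cap\bar B_r(o))\le \tfrac r2\cdot\length(\Sigma\cap\partial B_r(o))+a,\]
where $\tfrac r2\cdot\length$ is the area of the cone from $o$ over a curve on the sphere $\partial B_r(o)$ and $a$ is the area of the cone from $o$ over $\alpha\cap\bar B_r(o)$. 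On the other hand, the collar is by construction a cone with apex $o$, so the part of the collar inside $\bar B_r(o)$ is exactly the cone from $o$ over the radial projection of $\alpha\cap\bar B_r(o)$ to $\partial B_r(o)$ with the inner cone over $\alpha\cap\bar B_r(o)$ deleted; hence its area equals $\tfrac r2\cdot\length(\text{collar}\cap\partial B_r(o))-a$. Adding the two contributions, the terms $+a$ and $-a$ cancel, and (using that $\Sigma$ and the collar meet $\partial B_r(o)$ in disjoint sets) the displayed inequality drops out. The delicate point is this cone comparison at a general radius: for almost every $r$ the sphere $\partial B_r(o)$ is transverse to $\Sigma$, but one still has to arrange the coned-off competitor to be a disc so that disc-minimality of $\Sigma$ may be applied — these technicalities are carried out in \cite{EWW_embed}, and this is the step I expect to be the main obstacle.

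For part (a), take $r$ so large that the compact surface $\Sigma$ lies inside $\bar B_r(o)$. Then $\hat\Sigma_o\cap\bar B_r(o)=\Sigma\cup(\text{collar}\cap\bar B_r(o))$, and the collar inside $\bar B_r(o)$ is the cone from $o$ over $\alpha$ truncated at radius $r$ with the fixed inner cone $A$ (the union of the solid triangles $\solidtriangle op_ip_{i+1}$ over the edges of $\alpha$) removed. Using that the radial projection of an edge $[p_i,p_{i+1}]$ to the unit sphere about $o$ is a great-circle arc of length $\measuredangle p_i o p_{i+1}$, one computes $\area(\text{collar}\cap\bar B_r(o))=\tfrac{r^2}{2}\cdot\Psi_o(\alpha)-\area A$, so
\[W_o(r)=\area\Sigma+\tfrac{r^2}{2}\cdot\Psi_o(\alpha)-\area A,\]
and therefore $\tfrac{W_o(r)}{r^2}\to\tfrac12\cdot\Psi_o(\alpha)$ as $r\to\infty$ (consistently with monotonicity, since $\area\Sigma\le\area A$ because both span $\alpha$).

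For part (b), suppose $o\in\Sigma$. For $r$ smaller than the distance from $o$ to $\alpha$, the collar is disjoint from $\bar B_r(o)$, so $W_o(r)=\area(\Sigma\cap\bar B_r(o))$, and by monotonicity the limit $\Theta=\lim_{r\to0}\tfrac{W_o(r)}{r^2}$ exists. By the Douglas--Rado theorem, $\Sigma$ is near $o$ a smooth (possibly branched) minimal surface, possibly with additional sheets through $o$; in particular some sheet passes through $o$ with a well-defined tangent plane, and near $o$ this sheet covers a graph over that plane with vanishing differential at $o$, so its area inside $\bar B_r(o)$ is at least $\pi r^2(1-o(1))$. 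Hence $\Theta\ge\pi$, and by monotonicity $\tfrac{W_o(r)}{r^2}\ge\Theta\ge\pi$ for every $r>0$.
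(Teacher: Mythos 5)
Your argument matches the paper's almost line by line: the coarea inequality $W_o'(r)\ge\ell(r)$ combined with a cone comparison using area-minimality of $\Sigma$ yields $r\,W_o'(r)\ge 2\,W_o(r)$, and parts (a) and (b) follow from the collar asymptotics and the interior density bound, respectively; your explicit $\pm a$ cancellation just unpacks the paper's terse remark that $\tilde\Delta_r$ and $\Delta_r$ differ only inside $\Sigma$. The one variation is in (b), where you appeal to a tangent-plane graph of $\Sigma$ through $o$ (which needs a word at possible branch points), whereas the paper avoids any local structure analysis by noting that smooth points are dense in $\Sigma$ and $o\mapsto W_o(r)$ is continuous.
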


The classical monotonicity theorem states that the function $r\mapsto \frac{W_o(r)}{r^2}$ is monotonic 
while $r$ is smaller than the distance from $o$ to $\alpha$.
The stated version is due to Brian White \cite{white}.
The same statement holds for minimal surfaces \cite{EWW_embed}; its proof requires a deeper dive into differential geometry.
At the same time, the original formulation admits a generalization to a wider class of ambient spaces \cite{St_structure}.

\begin{proof}
Denote by $\lambda_o(r)$ the curve of intersection of the sphere $\partial B_r(o)$ with $\hat\Sigma_o$;
set $\ell(r)\z=\length[\lambda_o(r)]$.
Observe that 
\[W_o'(r)\ge \ell(r)\]
for almost all $r$.
(Formally speaking, this inequality follows from the so-called \emph{coarea formula}.)

Set $\Delta_r=\hat\Sigma_o\cap \bar B_r(o)$;
it is a surface bounded by $\lambda_o(r)$.
Let $\tilde\Delta_r$ be the cone over $\lambda_o(r)$ with the center at $o$.
Note that $\tilde\Delta_r$ differs from $\Delta_r$ only inside $\Sigma$.
Since $\Sigma$ is area-minimizing, we get that 
\[\area \tilde\Delta_r\ge \area \Delta_r
\arXiv{\leqno({*}{*})}{\eqno(2)}\]
for any $r>0$.
Observe that 
\begin{align*}
\area \Delta_r&=W_o(r),
&
\area \tilde\Delta_r&=\tfrac12\arXiv{\cdot}{} r\arXiv{\cdot}{} \ell(r).
\end{align*}
Applying $\arXiv{({*}{*})}{(2)}$, we get
\[r\arXiv{\cdot}{} \ell(r)\ge 2\arXiv{\cdot}{} W_o(r).\]
Therefore, 
\[r\arXiv{\cdot}{} W_o'(r)\ge 2\arXiv{\cdot}{} W_o(r)\]
for almost all $r$.
If $W_o$ is smooth, then this inequality implies the main statement.
In general, $(\frac{W_o(r)}{r^2})'\ge 0$ holds almost everywhere, and this is enough to conclude the monotonicity of $r\z\mapsto \frac{W_o(r)}{r^2}$ since $W_o$ is nondecreasing.

\noindent\textit{(a).}
Observe that up to a fixed error we have that $W_o(r)$ is the area of the ball of radius $r$ in the cone over $\alpha$ with the tip at $o$.
It follows that $\frac{W_o(r)}{r^2}$ approaches the area of the unit ball in this cone as $r\to\infty$ --- hence the result.

\noindent\textit{(b).}
The statement is evident for smooth points of $\Sigma$.
Since smooth points are dense in $\Sigma$, and $o\mapsto W_o(r)$ is a continuous function,
the main part of the theorem implies that $W_o(r)\ge\pi\arXiv{\cdot}{} r^2$ for \emph{any} point $o\in\Sigma$ --- hence the result.
\end{proof}

\begin{proof}[Proof of \arXiv{the main theorem}{Main Theorem}]
Suppose that $\tc\alpha<4\arXiv{{\cdot}}{}\pi$.
Consider an area-minimizing surface $\Sigma$ with the boundary $\alpha$; it exists by \arXiv{the Douglas–Rado theorem}{Theorem~\ref{thm:min-exists}}.
If $\Sigma$ has no self-intersections, then $\alpha$ is a trivial knot.

Suppose $\Sigma$ has a self-intersection at a point $o$.
In this case, the intersection $B_r(o)\z\cap \Sigma$ is covered by two or more small area-minimizing subdiscs of $\Sigma$.
By Theorem~\ref{thm:monotonicity}(b), we get 
\[\lim_{r\to0}\frac{W_o(r)}{r^2}\ge 2\arXiv{{\cdot}}{}\pi.\]

Applying the main statement in \arXiv{the monotonicity theorem}{Theorem~\ref{thm:monotonicity}}, we get $\frac{W_o(r)}{r^2}\z\ge 2\arXiv{{\cdot}}{}\pi$ for any $r>0$.
By Theorem~\ref{thm:monotonicity}(b) and Proposition~\ref{prop:angular-length} in Fáry's proof, we get
\[\tc{\alpha}\ge \Psi_o(\alpha)\ge 2\arXiv{\cdot}{} \frac{W_o(r)}{r^2}\ge 4\arXiv{{\cdot}}{}\pi\]
--- a contradiction.
\end{proof}

\section{Cantarella--Kuperberg--Kusner--Sullivan.}\label{sec:2nd-hull}

The following proof is due to Jason Cantarella, Greg Kuperberg, Robert Kusner, and John Sullivan \cite{CKKS};
this is the only proof in our collection that uses knot theory a bit beyond the basic definitions.

\begin{figure}[!ht]
\vskip-0mm
\centering
\arXiv{\includegraphics{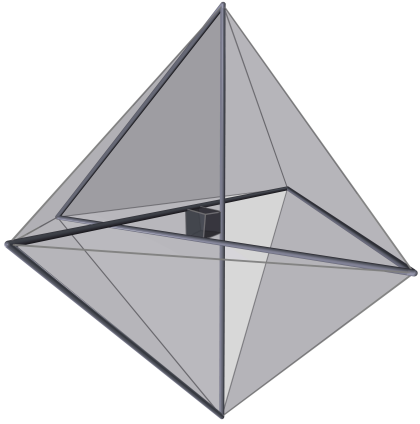}}{\includegraphics{trefoil}}
\caption{First and second hulls of a trefoil.}
\label{fig12}
\vskip0mm
\end{figure}

Choose a polygonal curve $\alpha\z=p_1\dots p_n$.
Suppose a plane $\Pi$ is in general position; that is, it does not contain vertices of $\alpha$.
Let us denote by $\cross_\alpha(\Pi)$ the number of intersections of $\Pi$ and $\alpha$.
Extend the function $\Pi\z\mapsto \cross_\alpha(\Pi)$ to the minimal upper semicontinuous function defined for all planes;
in other words, $\cross_\alpha(\Pi)$ is the maximal integer $k$ such that there is a generic plane $\Pi'$ arbitrarily close to $\Pi$ that intersects $\alpha$ at $k$ points.
The number $\cross_\alpha(\Pi)$ will be called the \emph{crossing number} of~$\Pi$.
Note that the crossing number is always even, and it cannot exceed $n$.

It is easy to see that the \emph{convex hull} $h_1(\alpha)$ of $\alpha$ can be defined in the following way:
\textit{$x\in h_1(\alpha)$ if $\cross_\alpha(\Pi)\z\ge2$ for any plane $\Pi$ containing $x$}.
This observation suggests the following definition of the \emph{second hull}:
\textit{$x\in h_2(\alpha)$ if $\cross_\alpha(\Pi)\ge4$ for any plane $\Pi$ containing $x$}.
See Figure~\ref{fig12}.

\begin{Theorem}\label{thm:2nd-hull}
The second hull of any nontrivial knot $\alpha$ has a nonempty interior.
\end{Theorem}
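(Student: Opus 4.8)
The plan is to prove the contrapositive: \emph{a knot $\alpha$ whose second hull $h_2(\alpha)$ has empty interior is trivial}. Proving this slightly stronger statement, rather than merely that $h_2(\alpha)=\varnothing$ forces triviality, saves us from bootstrapping separately from nonemptiness to nonempty interior. Triviality will be deduced from Lemma~\ref{lem5}: the goal is to build a piecewise linear map $F\colon\DD\to\RR^3$ whose restriction to $\partial\DD$ is a degree-one map onto $\alpha$ and which sends no interior point of $\DD$ to $\alpha$. First I would pass to general position: no four vertices of $\alpha$ coplanar, so that the crossing number of a plane is controlled; this can be arranged by an arbitrarily small triangular isotopy, and the hypothesis of empty interior is preserved up to a harmless approximation. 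The working hypothesis then becomes: \emph{through every point of $\RR^3$ there passes a plane that crosses $\alpha$ at most twice.}

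The next step is the radial-projection dictionary. Fix $o\notin\alpha$ and project $\alpha$ radially from $o$ to a small sphere $\mathbb{S}^2_o$; planes through $o$ become great circles and $\cross_\alpha(\Pi)$ becomes the number of crossings of the projected curve with the corresponding great circle. Hence $o\in h_1(\alpha)$ iff the projected curve lies in no open hemisphere, and $o\in h_2(\alpha)$ iff every plane through $o$ splits $\alpha$ into at least two arcs on each side. Under the hypothesis the reverse holds everywhere: for each $o$ there is a plane $\Pi_o\ni o$ that either misses $\alpha$ altogether --- so $o$ lies outside the convex hull $h_1(\alpha)$ --- or meets $\alpha$ in exactly two points, cutting it into a single arc on each side.

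I would then assemble these at-most-double planes into the spanning disc. The idea is to upgrade the pointwise family $o\mapsto\Pi_o$ into a \emph{sweep-out}: a continuous one-parameter family of planes $\{\Pi_t\}$, each crossing $\alpha$ at most twice, whose union is a large ball containing $\alpha$ --- equivalently, a function $\RR^3\to\RR$ all of whose level sets meet $\alpha$ in at most two points. Along such a sweep-out the (at most two) intersection points run over two arcs exhausting $\alpha$, and the slices, each a segment or empty, stack up into a piecewise linear disc bounded by $\alpha$ with interior disjoint from $\alpha$; this is exactly the mechanism behind Proposition~\ref{prop:one-max-one-min}, where a height function with a single maximum already produces such a disc. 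Feeding $F$ into Lemma~\ref{lem5} makes $\alpha$ trivial, which is the contrapositive of the theorem.

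The heart of the matter --- and the expected main obstacle --- is precisely this passage from a pointwise supply of at-most-double planes to a coherent global sweep-out (or directly to the map $F$). Near a point one has a whole set of admissible planes, and different points may prefer incompatible ones; reconciling them is where a genuinely three-dimensional-topology input of loop-theorem strength, encapsulated in Lemma~\ref{lem5}, is unavoidable, as opposed to the elementary arguments of the earlier sections. A secondary, more clerical difficulty is the general-position accounting: checking that perturbing $\alpha$ keeps $h_2(\alpha)$ from acquiring positive volume, that the relevant planes meet $\alpha$ in at worst transversal double points, and that the upper-semicontinuous definition of $\cross_\alpha$ does not hide extra crossings that would spoil the count.
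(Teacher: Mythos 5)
Your approach diverges sharply from the paper's, and the crucial step that you yourself flag as ``the heart of the matter'' is a genuine, unfilled gap. The paper does not try to build a spanning disc from at-most-double planes and does not invoke Lemma~\ref{lem5} at all in this section. Instead, it runs a minimal-counterexample argument: take a nontrivial knot $\alpha$ with the fewest vertices such that $h_2(\alpha)$ has empty interior; observe that any generic plane meeting $\alpha$ in exactly two points decomposes it as a connected sum $\alpha_1\#\alpha_2$; the Observation (inscribed curves have smaller second hull) plus minimality of $n$ forces one summand to be trivial; Claim~\ref{clm:connected-sum} (infinite swindle / genus additivity) is then used to show that all the resulting ``essential'' half-spaces are mutually compatible, so their intersection $W$ has nonempty interior and must lie inside $h_2(\alpha)$ --- contradiction. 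No sweep-out is ever constructed.

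The difficulty is not merely technical. Your working hypothesis --- through every point $o$ passes \emph{some} plane $\Pi_o$ crossing $\alpha$ at most twice --- supplies a highly discontinuous, multi-valued selection: near any given $o$ there may be many admissible planes, and nearby points can force incompatible choices. To feed this into Lemma~\ref{lem5} you would need a single coherent family (a ``sweep-out'' or height function all of whose level sets meet $\alpha$ in at most two points), and nothing in the hypothesis produces one; the pointwise data lives in a space of planes with no canonical section. Note also that if you \emph{did} have such a height function you would not even need Lemma~\ref{lem5}: a height function whose level sets meet $\alpha$ in at most two points has a single local maximum, so Proposition~\ref{prop:one-max-one-min} already gives triviality by elementary triangular isotopies. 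So the reliance on Lemma~\ref{lem5} signals that you are trying to compensate for data you don't actually have, and the compensation is not supplied. By contrast, the paper's connected-sum argument converts the empty-interior hypothesis into a combinatorial statement about planes crossing $\alpha$ exactly twice and bypasses the selection problem entirely; the price it pays is the knot-theoretic input of Claim~\ref{clm:connected-sum}, which is a cleaner black box than the unresolved gluing you would need.
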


Recall that \emph{spherical polygonal curve} is defined as a concatenation of a finite number of great-circle arcs on the unit sphere.
To show that Theorem~\ref{thm:2nd-hull} implies \arXiv{the main theorem}{Main Theorem}, we will apply the spherical Crofton formula:
\textit{for any spherical polygonal curve $\gamma$ we have}
\[\length \gamma=\pi\arXiv{{\cdot}}{} \overline n,\arXiv{\leqno(\threestars)}{\eqno(3)}\]
\textit{where $\overline n$ denotes the average number of intersections of $\gamma$ with equators.}
To prove this formula, check it for an arc and sum it up for all edges of $\gamma$.

\begin{proof}[Proof of \arXiv{the main theorem}{Main Theorem} modulo Theorem~\ref{thm:2nd-hull}]
Choose a point $o\in h_2(\alpha)$; we can assume that $o\notin\alpha$.
Consider the radial projection $\alpha^*$ of $\alpha$ to the unit sphere centered at $o$;
observe that 
\[\length\alpha^*=\Psi_o(\alpha),\]
where $\Psi_o(\alpha)$ denotes the angular length of $\alpha$ with respect to $o$; see Section~\ref{sec:fary}.

By Proposition~\ref{prop:angular-length}, it is sufficient to show that 
\[\length\alpha^*\ge 4\arXiv{{\cdot}}{}\pi.
\arXiv{\leqno(\fourstars)}{\eqno(4)}\]
Since $o$ is in the second hull, $\alpha^*$ crosses every equator in general position at least 4 times.
It follows that the average number of crossings is at least~4.
Applying $\arXiv{(\threestars)}{(3)}$, we get $\arXiv{(\fourstars)}{(4)}$.
\end{proof}

Suppose that a plane $\Pi$ divides a knot $\alpha$ into two arcs, one on each side; in particular, $\Pi$ intersects $\alpha$ at two points, say $p$ and $q$.
Then we can create two knots $\alpha_1$ and $\alpha_2$ by joining the ends of the two arcs by the line segment $[p,q]$.
In this case, we say that $\alpha$ is a \emph{connected sum} of $\alpha_1$ and $\alpha_2$.

\begin{Claim}\label{clm:connected-sum}
Suppose that a knot $\alpha$ is a connected sum of knots $\alpha_1$ and $\alpha_2$.
If at least one of the knots $\alpha_1$ or $\alpha_2$ is nontrivial, then so is $\alpha$.
\end{Claim}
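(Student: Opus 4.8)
The plan is to argue by contrapositive: if the connected sum $\alpha$ is trivial, then both summands $\alpha_1$ and $\alpha_2$ are trivial. Recall that $\alpha$ is obtained by slicing $\alpha$ with a plane $\Pi$ meeting it at two points $p$ and $q$, splitting it into two arcs $\beta_1$ and $\beta_2$ (one on each side of $\Pi$), and setting $\alpha_i = \beta_i \cup [p,q]$. The natural tool here is Lemma~\ref{lem5}: a knot is trivial if it bounds a piecewise linear disc $F\colon\DD\to\RR^3$ which is a degree-one map on the boundary and sends no interior point to the knot. So first I would use the triviality of $\alpha$ to produce such a spanning disc for $\alpha$; since $\alpha$ is trivial, one can construct it explicitly by reversing the sequence of triangular isotopies that carries $\alpha$ to a triangle, filling in each triangle $\solidtriangle pqx$ of the isotopy, and noting that by definition of triangular isotopy these triangles avoid the knot except along the prescribed edges, so the resulting disc meets $\alpha$ only along $\partial\DD$.

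The core step is then to cut this disc $\Sigma$ for $\alpha$ along the segment $[p,q]$ to obtain a spanning disc for one of the summands, say $\alpha_1$. Geometrically, $[p,q]\subset\Pi$, while $\Sigma$ is a disc whose boundary crosses $\Pi$ exactly at $p$ and $q$; I would perturb $\Sigma$ (using that it is piecewise linear and $\Pi$ can be taken in general position) so that $\Sigma\cap\Pi$ is a one-dimensional complex consisting of $[p,q]$ together with finitely many closed polygonal loops in the interior. Each interior loop bounds a subdisc of $\Sigma$ lying in one closed halfspace; I would remove these subdiscs one at a time by an innermost-loop argument, replacing each by a nearby copy pushed off $\Pi$ — this does not create new intersections with $\alpha$ since the loops are interior and $\alpha\cap\Pi=\{p,q\}$. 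After this surgery, $\Sigma\cap\Pi=[p,q]$, so $\Sigma$ is split by $[p,q]$ into two sub-discs $\Sigma_1,\Sigma_2$, with $\partial\Sigma_i = \beta_i\cup[p,q]=\alpha_i$. Each $\Sigma_i$ is a piecewise linear disc, is degree one on its boundary (inherited from $\Sigma$), and meets $\alpha_i$ only along its boundary: interior points of $\Sigma_i$ lie in the open halfspace, hence cannot lie on $\beta_j$ for the other arc, and the possibility of meeting $[p,q]$ in the interior was killed by the surgery. By Lemma~\ref{lem5}, each $\alpha_i$ is trivial, which is the contrapositive of the claim.

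The main obstacle I expect is the surgery step that reduces $\Sigma\cap\Pi$ to just the segment $[p,q]$: one must be careful that the innermost-loop replacements stay piecewise linear, do not introduce intersections with $\alpha$, and terminate (which follows from there being finitely many loops after a generic perturbation), and one must handle the fact that $[p,q]$ itself is a non-closed component of $\Sigma\cap\Pi$ ending at the two boundary points $p,q$ rather than being a loop. A secondary technical point is making precise the degree-one condition and the no-interior-intersection condition for the pieces $\Sigma_i$; both are straightforward once the surgery is done, but they are exactly the hypotheses Lemma~\ref{lem5} demands, so they should be stated carefully rather than waved away.
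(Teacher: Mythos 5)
The paper itself does not prove this Claim; it refers the reader to the Mazur ``infinite swindle'' and to the additivity of knot genus. You instead attempt a self-contained geometric argument by cutting a spanning disc of $\alpha$ along the plane $\Pi$, which is a genuinely different route (and closer in spirit to the standard normal-surface/innermost-disc arguments for prime decomposition). The overall strategy is reasonable, but there are two genuine gaps.

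First, the construction of the spanning disc $\Sigma$ for a trivial knot $\alpha$ by ``reversing the triangular isotopies and filling in each triangle'' does not obviously yield a map satisfying the hypothesis of Lemma~\ref{lem5} that interior points avoid $\alpha$. Each triangle $\solidtriangle pqx$ is disjoint from the knot at the moment that move is performed, but nothing prevents \emph{later} positions of the knot (in particular the final curve $\alpha$) from passing through the interior of earlier triangles. So the union of all these triangles may well meet $\alpha$ in interior points. Producing an embedded (or even interior-disjoint) spanning disc for a trivial knot is essentially the content of Dehn's lemma --- i.e.\ the converse direction of Lemma~\ref{lem5}, which the paper deliberately states only one way. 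You cannot simply take this for granted.

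Second, and more substantively, after a generic perturbation $\Sigma\cap\Pi$ is a $1$-manifold whose boundary on $\partial\DD$ is the two preimages of $p$ and $q$; so it consists of finitely many closed loops plus \emph{one arc} joining $p$ to $q$. There is no reason for the image of that arc to be the straight segment $[p,q]$. Your assertion that after perturbation ``$\Sigma\cap\Pi$ is a one-dimensional complex consisting of $[p,q]$ together with finitely many closed polygonal loops'' is not justified, and consequently the identification $\partial\Sigma_i=\beta_i\cup[p,q]=\alpha_i$ fails: cutting along the arc produces pieces whose boundary is $\beta_i\cup\gamma$ for some polygonal path $\gamma\subset\Pi$ from $p$ to $q$, not $\alpha_i$. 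Repairing this requires additional work --- either showing $\gamma$ can be taken to be embedded and then isotoping $\gamma$ to $[p,q]$ inside $\Pi$ (but embeddedness of $\gamma$ requires $\Sigma$ to be embedded, which Lemma~\ref{lem5} does not supply), or gluing onto $\Sigma_i$ a singular planar disc interpolating between $\gamma$ and $[p,q]$ and verifying its interior avoids $[p,q]\subset\alpha_i$, which is delicate since $\gamma$ may cross the segment. Either way, this is the crux of the matter and is currently missing; the paper's citation to the swindle or to genus additivity sidesteps exactly this kind of surgery bookkeeping.
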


This claim has an amusing proof via the so-called \emph{infinite swindle} \cite{mazur}; see also \cite{poenaru}.
It also follows by additivity of knot genus \cite[Section 4.3]{adams}.

Suppose $\beta$ is a polygonal curve \emph{inscribed} in $\alpha$;
that is, the vertices of $\beta$ lie on $\alpha$ and they appear in the same cyclic order on $\alpha$ and $\beta$.
If a plane in general position intersects an edge of $\beta$, then it intersects
the corresponding arc of $\alpha$. 
Therefore, we get the following.

\begin{Observation}
If a polygonal line $\beta$ is inscribed in $\alpha$, then 
\[h_2(\beta)\z\subset h_2(\alpha).\]
\end{Observation}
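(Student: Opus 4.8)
The plan is to prove the slightly stronger statement that
\[
\cross_\alpha(\Pi)\ge\cross_\beta(\Pi)
\]
holds for \emph{every} plane $\Pi$, not just those in general position. The Observation is then immediate: if $x\in h_2(\beta)$ then $\cross_\beta(\Pi)\ge 4$ for every plane $\Pi\ni x$, and the displayed inequality upgrades this to $\cross_\alpha(\Pi)\ge 4$ for every such $\Pi$, i.e. $x\in h_2(\alpha)$.

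First I would treat the generic case, which is essentially the remark made just before the statement. Let $\Pi$ be in general position with respect to both $\alpha$ and $\beta$; in particular $\Pi$ avoids every vertex of $\beta$, and these vertices lie on $\alpha$. Each edge $[v_i,v_{i+1}]$ of $\beta$ that $\Pi$ meets is crossed transversally at an interior point, so $v_i$ and $v_{i+1}$ lie strictly on opposite sides of $\Pi$; hence the sub-arc of $\alpha$ joining $v_i$ to $v_{i+1}$ (the one carrying no other vertex of $\beta$) must cross $\Pi$. For distinct edges of $\beta$ these sub-arcs are pairwise disjoint except at shared endpoints $v_j$, none of which lies on $\Pi$, so the crossings they contribute to $\alpha$ are all distinct. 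This yields $\cross_\alpha(\Pi)\ge\cross_\beta(\Pi)$ for such $\Pi$.

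Next I would pass to an arbitrary plane $\Pi$ by perturbation, using that $\cross_\alpha$ is, by definition, the minimal upper semicontinuous extension of the honest crossing count over generic planes. Put $k=\cross_\beta(\Pi)$. By definition there are planes $\Pi_j\to\Pi$, each in general position with respect to $\beta$, with $\cross_\beta(\Pi_j)=k$. The $k$ intersection points of $\Pi_j$ with $\beta$ are transversal interior crossings, hence persist under every sufficiently small further perturbation; choosing such a perturbation $\Pi_j'$ generically makes it also in general position with respect to $\alpha$ (still with respect to $\beta$), keeps $\cross_\beta(\Pi_j')\ge k$, and keeps $\Pi_j'\to\Pi$. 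By the generic case, $\cross_\alpha(\Pi_j')\ge\cross_\beta(\Pi_j')\ge k$. Since the $\Pi_j'$ are in general position with respect to $\alpha$ and converge to $\Pi$, the definition of the upper semicontinuous extension gives $\cross_\alpha(\Pi)\ge k=\cross_\beta(\Pi)$, which completes the argument.

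The geometric content is light — monotonicity of the crossing count under inscription is exactly what the remark preceding the statement provides. The only real obstacle is the bookkeeping around non-generic planes: one must be careful that the perturbation $\Pi_j'$ can be chosen simultaneously generic for $\alpha$ and for $\beta$ without destroying any of the $k$ transversal crossings with $\beta$, and that the upper semicontinuous definition of $\cross_\alpha$ is invoked in the right direction.
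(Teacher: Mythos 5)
Your argument is correct and takes essentially the same route as the paper: the one-sentence remark preceding the Observation (a generic plane crossing an edge of $\beta$ must cross the corresponding arc of $\alpha$) is exactly the paper's whole justification, and you have simply made explicit the perturbation bookkeeping needed to upgrade the generic-plane inequality $\cross_\alpha(\Pi)\ge\cross_\beta(\Pi)$ to all planes. The persistence of transversal crossings under small perturbations, and the passage to a plane simultaneously generic for $\alpha$ and $\beta$, is precisely the step the paper leaves implicit, and you handle it correctly.
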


\begin{proof}[Proof of Theorem~\ref{thm:2nd-hull}]
Assume the contrary; let $\alpha$ be a nontrivial polygonal knot with the smallest number of vertices, say $n$, such that $h_2(\alpha)$ has an empty interior.
It is easy to see that $n\ge 6$;
in fact, any simple space 5-gon is a trivial knot.

Suppose $\Pi$ is a plane in general position that divides $\alpha$ into two arcs; thus it defines a decomposition of $\alpha$ into a connected sum of two knots $\alpha_1$ and $\alpha_2$.
By the observation, $h_2(\alpha_1)$ and $h_2(\alpha_2)$ have empty interiors.
It follows that one of these knots, say $\alpha_1$, is trivial;
therefore, the other, respectively $\alpha_2$, is isotopic to $\alpha$.
Indeed, if $n_1$ and $n_2$ denote the number of vertices in $\alpha_1$ and $\alpha_2$, then $n_1+n_2=n+4$. 
If both knots are nontrivial, then $n_1\ge 6$ and $n_2\ge 6$.
Therefore, $n_1<n$ and $n_2<n$, which contradicts the minimality of $n$.

The open half-space $H$ bounded by $\Pi$ and containing $\alpha_2\setminus\Pi$ will be called \emph{essential}. Intuitively, an essential half-space cuts off
a trivial knot of $\alpha$. (A half-space containing all of $\alpha$ will be considered essential as well.)
A rather straightforward application of Claim~\ref{clm:connected-sum} implies that if $H'$ is another essential half-space for $\alpha$, then it is also essential for $\alpha_2$; see Figure~\ref{fig13}.
It follows that the intersection of all essential half-spaces, say $W$, has nonempty interior --- roughly speaking, it has to contain the region where the knotting of $\alpha$ takes place.

\begin{figure}[!ht]
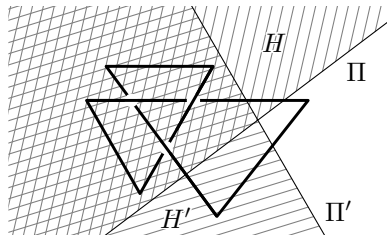

\vskip-0mm
\centering
\arXiv{\includegraphics{mppics/pic-55}}{\includegraphics{pic-55}}
\vskip0mm
\caption{Two essential half-spaces.}
\label{fig13}
\end{figure}

Finally observe that if $\cross_\alpha(\Pi)=2$ for a plane $\Pi$ in general position, then $\Pi$ bounds an essential half-space. 
It follows that if a plane in general position intersects $W$, then it has  crossing number at least 4,
so $h_2(\alpha)\supset W$ --- hence the result.
\end{proof}

\begin{acknowledgment}{Acknowledgment.}
We wish to thank anonymous referees for their thoughtful reading and insightful suggestions.
The first author was partially supported by the NSF grant DMS-2005279, the Simons Foundation grant \#584781, and Minobrnauki, grant \#075-15-2022-289.
The second author was partially supported by DFG grant SPP 2026. 
\end{acknowledgment}

\end{document}